\documentclass[10pt]{amsart}
\usepackage{amsmath,amsfonts,amsthm,amssymb,amsxtra,hyperref}
\usepackage{color,graphicx}\parskip 4pt
\addtolength{\hoffset}{-1.5cm}\addtolength{\textwidth}{3.5cm}
\addtolength{\voffset}{-1.5cm}\addtolength{\textheight}{3cm}
\newcommand{\Email}[1]{{\sl E-mail address:\/} {\rm\textsf{#1}}}
\newtheorem{thm}{Theorem}
\newtheorem{cor}[thm]{Corollary}
\newtheorem{lem}[thm]{Lemma}
\newtheorem{prop}[thm]{Proposition}
\newtheorem{remark}[thm]{Remark}
\newcommand{\R}{{\mathbb R}}
\renewcommand{\S}{{\mathbb S}}
\newcommand{\N}{{\mathbb N}}
\newcommand{\be}[1]{\begin{equation}\label{#1}}
\newcommand{\ee}{\end{equation}}
\renewcommand{\(}{\left(}
\renewcommand{\)}{\right)}

\renewcommand{\S}{\mathbb{S}}
\newcommand{\iS}[1]{\int_{\S^d}{#1}\;d\mu}

\newcommand{\nrm}[2]{\|{#1}\|_{\mathrm L^{#2}(\S^d)}}

\renewcommand{\H}{\mathrm H}
\newcommand{\Lap}{\Delta_{\S^d}}

\newcommand{\F}[1]{{\mathcal F}[#1]}
\newcommand{\I}[1]{{\mathcal I}[#1]}
\renewcommand{\L}{{\mathcal L}\,}
\newcommand{\ix}[1]{\int_{-1}^1{#1}\;d\nu_d}
\newcommand{\nrmx}[2]{\|#1\|_{#2}}
\newcommand{\scal}[2]{\left\langle{#1},{#2}\right\rangle}

\newcommand{\qp}{p}
\newcommand{\fu}{u}

\usepackage{color}

\newcommand{\nc}{\normalcolor}

\begin{document}
\title[Sharp interpolation inequalities]{Sharp interpolation inequalities on the sphere :\\ new methods and consequences}
\author[J.~Dolbeault , M.J.~Esteban, M.~Kowalczyk, \& M.~Loss]{Jean Dolbeault, Maria J.~Esteban, Michal Kowalczyk, and Michael Loss}
\address{J.~Dolbeault: Ceremade, Universit\'e Paris-Dauphine, Place de Lattre de Tassigny, 75775 Paris C\'edex~16, France.\vspace*{-6pt}
\Email{dolbeaul@ceremade.dauphine.fr}}
\address{M.J.~Esteban: Ceremade, Universit\'e Paris-Dauphine, Place de Lattre de Tassigny, 75775 Paris C\'edex~16, France.\vspace*{-6pt}
\Email{esteban@ceremade.dauphine.fr}}
\address{M.~Kowalczyk: Departamento de Ingenier\'{\i}a  Matem\'atica and Centro de Modelamiento Matem\'atico (UMI 2807 CNRS), Universidad de Chile, Casilla 170 Correo 3, Santiago, Chile.\vspace*{-6pt}
\Email{kowalczy@dim.uchile.cl}}
\address{M.~Loss: Skiles Building, Georgia Institute of Technology, Atlanta GA 30332-0160, USA.
\Email{loss@math.gatech.edu}}
\date{\today}
\begin{abstract} These notes are devoted to various considerations on a family of sharp interpolation inequalities on the sphere, which in dimension two and higher interpolate between Poincar\'e, logarithmic Sobolev and critical Sobolev (Onofri in dimension two) inequalities. We emphasize the connexion between optimal constants and spectral properties of the Laplace-Beltrami operator on the sphere. We shall address a series of related observations and give proofs based on symmetrization and the ultraspherical setting.
\end{abstract}
\keywords{Sobolev inequality; interpolation; Gagliardo-Nirenberg inequalities; logarithmic Sobolev inequality; heat equation\newline
{\it Mathematics Subject Classification (2010).\/}
26D10; 46E35; 58E35}
\maketitle
\thispagestyle{empty}

\section{Introduction}\label{Sec:SphereInterpolation}

The following interpolation inequality holds on the sphere:
\be{Ineq:Interpolation}
\frac{\qp-2}d\iS{|\nabla u|^2}+\iS{|u|^2}\ge\(\iS{|u|^\qp}\)^{2/\qp}\quad\forall\;u\in\H^1(\S^d,d\mu)
\ee
for any $\qp\in(2,2^*]$ with $2^*=2\,d/(d-2)$ if $d\ge 3$ and for any $\qp\in(2,\infty)$ if $d=2$. In~\eqref{Ineq:Interpolation}, $d\mu$ is the uniform probability measure on the $d$-dimensional sphere, that is, the measure induced by Lebesgue's measure on $\S^d\subset\R^{d+1}$, up to a normalization factor such that $\mu(\S^d)=1$.

Such an inequality has been established by M.-F.~Bidaut-V\'eron and L.~V\'eron in \cite{BV-V} in the more general context of compact manifolds with uniformly positive Ricci curvature. Their method is based on the Bochner-Lichnerowicz-Weitzenb\"ock formula and the study of the set of solutions of an elliptic equation which is seen as a bifurcation problem and contains the Euler-Lagrange equation associated to the optimality case in~\eqref{Ineq:Interpolation}. Later, in \cite{MR1230930}, W.~Beckner gave an alternative proof based on Legendre's duality, on the Funk-Hecke formula, which has been proved in \cite{45.0702.01,46.0632.02}, and on the expression of some optimal constants found by E.~Lieb in \cite{MR717827}. D.~Bakry, A.~Bentaleb and S.~Fahlaoui in a series of papers based on the \emph{carr\'e du champ} method and mostly devoted to the ultraspherical operator have shown a result which turns out to give yet another proof, which is anyway very close to the method of \cite{BV-V}. Their computations allow to slightly extend the range of the parameter $\qp$: see \cite{MR1260331,MR2141179,MR1244430,MR1231419,MR1768016,MR1917038,MR1971589,MR2564058,MR2641798}, and \cite{MR674060,MR578933} for earlier related works.

In all computations based on the Bochner-Lichnerowicz-Weitzenb\"ock formula, the choice of exponents in the computations appears somewhat mysterious. The seed for such computations can be found in \cite{MR615628}. Our purpose is on one hand to give alternative proofs, at least for some ranges of the parameter $\qp$, which do not rely on such a very technical choice. On the other hand, we also simplify the existing proofs (see Section \ref{SubSec:classical}).

Inequality~\eqref{Ineq:Interpolation} is remarkable for several reasons:\begin{enumerate}
\item It is optimal in the sense that $1$ is the optimal constant. By H\"older's inequality, we know that $\nrm u2\le\nrm up$ so that the equality case can only be achieved by functions which are constant a.e. Of course, the main issue is to prove that the $(\qp-2)/d$ constant is optimal, which is one of the classical issues of the so-called $A$-$B$ problem, for which we primarily refer to \cite{MR1688256}.
\item If $d\ge3$, the case $\qp=2^*$ corresponds to Sobolev's inequality. Using the stereographic projection as in \cite{MR717827}, we easily recover Sobolev's inequality in the euclidean space $\R^d$ with optimal constant and obtain a simple characterization of the extremal functions found by T. Aubin and G. Talenti: see \cite{MR0448404,MR0289739,MR0463908}.
\item In the limit $\qp\to 2$, one obtains the logarithmic Sobolev inequality on the sphere, while by taking $\qp\to\infty$ if $d=2$, one recovers Onofri's inequality; see \cite{MR1143664} and Corollary~\ref{Cor:Onofri} below.
\end{enumerate}

Exponents are not restricted to $\qp>2$. Consider indeed the functional
\[
\mathcal Q_\qp[u]:=\frac{\qp-2}d\,\frac{\iS{|\nabla u|^2}}{\(\iS{|u|^\qp}\)^{2/\qp}-\iS{|u|^2}}
\]
for $\qp\in[1,2)\cup(2,2^*]$ if $d\ge 3$ or $\qp\in[1,2)\cup(2,\infty)$ if $d=2$, and
\[
\mathcal Q_2[u]:=\frac1d\,\frac{\iS{|\nabla u|^2}}{\iS{|u|^2\log\(|u|^2/\iS{|u|^2}\)}}
\]
for any $d\ge1$. Because $d\mu$ is a probability measure, $\(\iS{|u|^\qp}\)^{2/\qp}-\iS{|u|^2}$ is nonnegative if $\qp>2$, nonpositive if $\qp\in[1,2)$, and equal to zero if and only if $u$ is constant a.e. Denote by $\mathcal A$ the set of $\H^1(\S^d,d\mu)$ functions which are not a.e.~constant and consider the infimum
\be{Eqn:Infimum}
\mathcal I_\qp:=\inf_{u\in\mathcal A}\mathcal Q_\qp[u]\;.
\ee
With these notations, we can state a slightly more general result than the one of~\eqref{Ineq:Interpolation}, which goes as follows and also covers the range $\qp\in[1,2]$.
\begin{thm}\label{Thm:Main} With the above notations, $\mathcal I_\qp=1$ for any $\qp\in[1,2^*]$ if $d\ge 3$, or any $\qp\in[1,\infty)$ if $d=1$, $2$.\end{thm}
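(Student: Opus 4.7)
The theorem splits into an upper bound $\mathcal I_p\le 1$ (sharpness of the constant) and a lower bound $\mathcal I_p\ge 1$ (the inequality itself). For sharpness I would plug the one-parameter family $u_\varepsilon := 1 + \varepsilon\,\phi$ into $\mathcal Q_p$, where $\phi$ is a first spherical harmonic on $\S^d$ normalized by $\iS{\phi}=0$ and $\iS{\phi^2}=1$ (so $-\Lap\phi = d\,\phi$). A direct Taylor expansion yields $\iS{|\nabla u_\varepsilon|^2}=d\,\varepsilon^2$ and $\bigl(\iS{|u_\varepsilon|^p}\bigr)^{2/p}-\iS{u_\varepsilon^2}=(p-2)\,\varepsilon^2+O(\varepsilon^3)$; the factor $(p-2)$ cancels in $\mathcal Q_p$, giving $\mathcal Q_p[u_\varepsilon]\to 1$ as $\varepsilon\to 0$ for every admissible $p\ne 2$. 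The same expansion, applied instead to $\iS{u^2\log(u^2/\!\iS{u^2})}$, treats the $p=2$ case.

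For the lower bound, the range $p\in(2,2^*]$ (resp.\ $p\in(2,\infty)$ when $d\le 2$) is exactly the already-established inequality~\eqref{Ineq:Interpolation}. To cover $p=2$, I would pass to the limit $p\downarrow 2$ in~\eqref{Ineq:Interpolation}: using
\[
\lim_{p\to 2}\frac{\bigl(\iS{|u|^p}\bigr)^{2/p}-\iS{u^2}}{p-2}\;=\;\tfrac12\,\iS{u^2\log(u^2/\!\iS{u^2})}
\]
(obtained by differentiating $p\mapsto(\iS{|u|^p})^{2/p}$ at $p=2$), the inequality~\eqref{Ineq:Interpolation} transforms into the sharp logarithmic Sobolev inequality on $\S^d$, namely $\mathcal Q_2[u]\ge 1$.

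The essentially new content is $p\in[1,2)$. Since $|\nabla|u||\le|\nabla u|$ pointwise, one checks that $\mathcal Q_p[|u|]\le\mathcal Q_p[u]$, so it suffices to treat $u\ge 0$. The endpoint $p=1$ then reads $\iS{|\nabla u|^2}\ge d\bigl(\iS{u^2}-(\iS{u})^2\bigr)$, which is precisely the sharp Poincar\'e inequality given by the spectral gap $d$ of $-\Lap$. For $p\in(1,2)$, I would reduce via symmetrization to the one-dimensional ultraspherical setting (operator $\mathcal L f=(1-x^2)f''-d\,x\,f'$ on $(-1,1)$ with weight $(1-x^2)^{d/2-1}\,dx$) and rerun the Bochner-Lichnerowicz-Weitzenb\"ock/\emph{carr\'e du champ} argument used for~\eqref{Ineq:Interpolation}: the sign of $p-2$ flips in the intermediate pointwise identity but its definite sign after integration persists, giving $\mathcal Q_p[u]\ge 1$ throughout $[1,2)$ (this is the content of the Bakry-Bentaleb-Fahlaoui extension cited in the introduction). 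The main obstacle is precisely this interior range: Poincar\'e$+$Kato is sharp only at $p=1$, and the admissible interval of exponents for which the $\Gamma_2$ sign-analysis remains valid is exactly what determines that $\mathcal I_p\ge 1$ extends continuously from $p>2$ across $p=2$ and all the way down to $p=1$.
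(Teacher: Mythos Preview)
Your proposal is correct, and for $p\ge 2$ it coincides with the paper's organization: \eqref{Ineq:Interpolation} covers $p>2$, Corollary~\ref{Cor:LogSob} obtains $p=2$ by the limit, Remark~\ref{Rem:Poincare} identifies $p=1$ with Poincar\'e, and sharpness is checked by the same first-eigenfunction expansion. For $p\in(1,2)$ your route---symmetrize to the ultraspherical setting and rerun the Bochner/\emph{carr\'e du champ} computation---is indeed one of the proofs the paper records (Proposition~\ref{Prop:Ultra}, Section~\ref{SubSec:classical}) and is essentially the Bakry--Bentaleb--Fahlaoui argument you cite. But the paper's \emph{featured} proof for this range (Proposition~\ref{Prop:4}) is genuinely different: it uses the $p=2$ logarithmic Sobolev inequality to derive Nelson hypercontractivity $\nrm{e^{t_*\Lap}u}{2}\le\nrm u{2/p}$ for a specific time $t_*$, and then decomposes $u$ spectrally along the eigenspaces of $-\Lap$, bounding $(1-e^{-2\lambda_k t_*})/\lambda_k$ by its value at $\lambda_1=d$. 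That method avoids symmetrization, minimizers, and the technical choice of the exponent~$\beta$, and makes the appearance of the constant $d$ completely transparent; your approach, in exchange, treats $p<2$ and $p>2$ by a single unified computation. One small caveat: your description ``the sign of $p-2$ flips \ldots\ but its definite sign after integration persists'' is imprecise---in Section~\ref{SubSec:classical} the mechanism is rather that one can choose $\beta$ so that the discriminant $\mathsf b^2-\mathsf a\,\mathsf c$ is negative, forcing all three integrals in~\eqref{identity0} to vanish and the minimizer to be constant.
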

As already explained above, in the case $(2, 2^*]$ the above theorem was proved first in \cite[Corollary 6.2]{BV-V}, and then in \cite{MR1230930} using previous results by E.~Lieb in \cite{MR717827} and the Funk-Hecke formula (see \cite{45.0702.01,46.0632.02}). The case $p=2$ was covered in \cite{MR1230930}. The whole range $p\in[1, 2^*]$ was covered in the case of the ultraspherical operator in \cite{MR2564058,MR2641798}. Here we give alternative proofs for various ranges of $p$, which are less technical, and interesting by themselves, as well as some extensions.

Notice that the case $\qp=1$ can be written as
\[
\iS{|\nabla u|^2}\ge d\left[\iS{|u|^2}-\(\iS{|u|}\)^2\right]\quad\forall\;u\in\H^1(\S^d,d\mu)\;,
\]
which is equivalent to the usual Poincar\'e inequality
\[
\iS{|\nabla u|^2}\ge d\iS{|u-\overline u|^2}\quad\forall\;u\in\H^1(\S^d,d\mu)\quad\mbox{with}\quad\overline u=\iS u\;.
\]
See Remark~\ref{Rem:Poincare}, for more details. The case $\qp=2$ provides the logarithmic Sobolev inequality on the sphere. It holds as consequence of the inequality for $\qp\neq 2$ (see Corollary~\ref{Cor:LogSob}).

For $\qp\neq 2$, the existence of a minimizer of
\[
u\mapsto\iS{|\nabla u|^2}+\frac{d\,\mathcal I_p}{\qp-2}\left[\nrm u2^2-\nrm up^2\right]
\]
in $\big\{u\in\H^1(\S^d,d\mu)\,:\,\iS{|u|^\qp}=1\big\}$ is easily achieved by variational methods and will be taken for granted. Compactness for either $\qp\in[1,2)$ or $2<\qp<2^*$ is indeed classical, while the case $\qp=2^*$, $d\ge 3$ can be studied by concentration-compactness methods. If a function $u\in\H^1(\S^d,d\mu)$ is optimal for \eqref{Ineq:Interpolation} with $p\neq2$, then it is solves the Euler-Lagrange equation
\be{Eqn:EL}
-\Lap u=\frac{d\,\mathcal I_p}{\qp-2}\,\left[\nrm up^{2-p}\,u^{\qp-1}-u\right]
\ee
where $\Lap$ denotes the Laplace-Beltrami operator on the sphere $\S^d$.

In any case, it is possible to normalize the $\mathrm L^\qp(\S^d)$-norm of $u$ to $1$ without restriction because of the zero homogeneity of $\mathcal Q_\qp$. It turns out that the optimality case is achieved by the constant function, with value $u\equiv1$ if we assume $\iS{|u|^\qp}=1$, in which case the inequality degenerates because both sides are equal to $0$. This explains why the dimension $d$ shows up here: the sequence $(u_n)_{n\in\N}$ such that
\[
u_n(x)=1+\frac 1n\,v(x)
\]
with $v\in\H^1(\S^d,d\mu)$ such that $\iS v=0$ is indeed minimizing if and only if
\[
\iS{|\nabla v|^2}\ge d\iS{|v|^2}\;,
\]
and the equality case is achieved if $v$ is an optimal function for the above Poincar\'e inequality, \emph{i.e.}~a function associated to the first non-zero eigenvalue of the Laplace-Beltrami operator $-\,\Lap$ on the sphere $\S^d$. Up to a rotation, this means
\[
v(\xi)=\xi_d\quad\forall\;\xi=(\xi_0,\,\xi_1,\ldots\xi_d)\in\S^d\subset\R^{d+1}
\]
since $-\,\Lap v=d\,v$. Recall that the corresponding eigenspace of $-\,\Lap$ is $d$ dimensional and generated by the composition of $v$ with an arbitrary rotation.

\subsection{The logarithmic Sobolev inequality}\label{Sec:LogSob}

As a first classical consequence of \eqref{Eqn:Infimum}, we have a logarithmic Sobolev inequality. This result is rather classical; related forms of the result can be found for instance in~\cite{Bakry-Emery85} or in \cite{AMTU}.
\begin{cor}\label{Cor:LogSob} Let $d\ge1$. For any $u\in\H^1(\S^d,d\mu)\setminus\{0\}$, we have
\[
\iS{|u|^2\,\log\(\frac{|u|^2}{\iS{|u|^2}}\)}\le\frac 2d\iS{|\nabla u|^2}\;.
\]
Moreover, the constant $\frac 2d$ is sharp.\end{cor}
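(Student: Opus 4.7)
The plan is to obtain the logarithmic Sobolev inequality as the $p \to 2$ limit of the family provided by Theorem~\ref{Thm:Main}. The identity $\mathcal{I}_p = 1$ for $p>2$ reads
\[
\iS{|\nabla u|^2} \ge \frac{d}{p-2}\left[\left(\iS{|u|^p}\right)^{2/p} - \iS{|u|^2}\right]
\qquad\forall\;u \in \H^1(\S^d,d\mu).
\]
Setting $F(p) := \left(\iS{|u|^p}\right)^{2/p}$, the bracket equals $F(p) - F(2)$, and a short computation of $(\log F)'(2)$ gives
\[
F'(2) = \tfrac{1}{2}\iS{|u|^2\log\left(\frac{|u|^2}{\iS{|u|^2}}\right)}.
\]
Letting $p \searrow 2$ in the displayed inequality then yields
\[
\iS{|\nabla u|^2} \ge \tfrac{d}{2}\iS{|u|^2\log\left(\frac{|u|^2}{\iS{|u|^2}}\right)},
\]
which is the stated bound.

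For the sharpness claim, I will test with $u_\epsilon = 1 + \epsilon\, v$, where $v \in \H^1(\S^d,d\mu)$ is nonconstant with $\iS v = 0$. A Taylor expansion in $\epsilon$, using the mean-zero condition to kill the first-order terms after integration, gives
\[
\iS{|u_\epsilon|^2\log\left(\frac{|u_\epsilon|^2}{\iS{|u_\epsilon|^2}}\right)} = 2\epsilon^2\,\iS{v^2} + O(\epsilon^3),
\]
while $\iS{|\nabla u_\epsilon|^2} = \epsilon^2\,\iS{|\nabla v|^2}$. If the inequality were to hold with a constant $C < 2/d$, then dividing by $\epsilon^2$ and sending $\epsilon \to 0$ would force $\iS{|\nabla v|^2} \ge (2/C)\,\iS{v^2}$ with $2/C > d$ for every admissible $v$, contradicting the saturation of the Poincar\'e inequality on $\S^d$ by the first non-constant spherical harmonics such as $v(\xi) = \xi_d$. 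Hence $2/d$ cannot be improved.

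The only point requiring care is the justification of the limit $p \searrow 2$ for a generic $u \in \H^1(\S^d,d\mu) \setminus \{0\}$. For $u$ smooth and bounded away from $0$ this is immediate from differentiation under the integral sign. For general $u$ one can approximate by truncations $u_n := \max(1/n,\min(n,|u|))$ and pass to the limit by monotone and dominated convergence, using that $s^2|\log s^2|$ is controlled by $s^p + s^{2-\delta}$ for suitable parameters so that the entropy is continuous on $\H^1(\S^d,d\mu)$. I expect this passage to the limit to be the only technical ingredient; the structural content of the argument is just the differentiation-in-$p$ calculation above.
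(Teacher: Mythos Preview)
Your proof is correct and follows essentially the same route as the paper's: obtain the inequality as the $p\to 2$ limit of the interpolation family from Theorem~\ref{Thm:Main}, and check sharpness by Taylor-expanding $u_\varepsilon=1+\varepsilon\,v$ with $v(\xi)=\xi_d$ so as to reduce to the Poincar\'e constant. Your version is simply more explicit---you write out the derivative $F'(2)$ and sketch the approximation argument needed to pass to the limit for general $u\in\H^1(\S^d,d\mu)$---whereas the paper records the limit in one line and leaves these details implicit.
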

\begin{proof} The inequality is achieved by taking the limit as $\qp\to 2$ in \eqref{Eqn:Infimum}. To see that the constant $\frac 2d$ is sharp, we can observe that
\[
\lim_{\varepsilon\to 0}\iS{|1+\varepsilon\,v|^2\,\log\(\frac{|1+\varepsilon\,v|^2}{\iS{|1+\varepsilon\,v|^2}}\)}=2\iS{\left|v-\overline v\right|^2}
\]
with $\overline v=\iS v$. The result follows by taking $v(\xi)=\xi_d$.\end{proof}

\section{Extensions}\label{Sec:Extensions}

\subsection{Onofri's inequality}\label{Sec:Onofri}

In case of dimension $d=2$, \eqref {Ineq:Interpolation} holds for any $\qp>2$ and we recover Onofri's inequality by taking the limit $\qp\to\infty$. This result is standard in the literature: see for instance \cite{MR1230930}. For completeness, let us give a statement and a short proof.
\begin{cor}\label{Cor:Onofri} Let $d=1$ or $d=2$. For any $v\in\H^1(\S^d,d\mu)$, we have
\[
\iS{e^{v-\overline v}}\le e^{\frac 1{2\,d}\iS{|\nabla v|^2}}
\]
where $\overline v=\iS v$ is the average of $v$. Moreover, the constant $\frac 1{2\,d}$ in the right hand side is sharp.\end{cor}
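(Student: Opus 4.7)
The plan is to derive Onofri's inequality as the $\qp\to\infty$ limit of~\eqref{Ineq:Interpolation}, which Theorem~\ref{Thm:Main} provides for every $\qp>2$ in dimensions $d=1,2$. Both sides of the claimed inequality are invariant under $v\mapsto v+c$, so I may assume $\overline v=0$. A further reduction to bounded $v\in\H^1(\S^d,d\mu)$ is helpful for the limit argument; the general $\H^1$ case will then be recovered by truncating $v$ at levels $\pm n$, re-centering, and letting $n\to\infty$, using Fatou's lemma on the exponential side and $\H^1$-continuity on the Dirichlet side.

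For bounded $v$ with $\overline v=0$, the natural trial function in~\eqref{Ineq:Interpolation} is $u_\qp:=1+v/\qp$: it is positive for $\qp$ large and $u_\qp^{\qp}\to e^v$ uniformly. A direct expansion gives
\[
\frac{\qp-2}{d}\iS{|\nabla u_\qp|^2}+\iS{u_\qp^2}=1+\frac{1}{d\,\qp}\iS{|\nabla v|^2}+O\(\frac{1}{\qp^2}\),
\]
and writing $(1+v/\qp)^{\qp}=e^v+O(1/\qp)$ uniformly, followed by taking the $2/\qp$-power of the average, produces
\[
\(\iS{u_\qp^{\qp}}\)^{2/\qp}=1+\frac{2}{\qp}\log\iS{e^v}+O\(\frac{1}{\qp^2}\).
\]
Subtracting $1$ from both sides of~\eqref{Ineq:Interpolation}, multiplying by $\qp/2$ and sending $\qp\to\infty$ yields $\log\iS{e^v}\le\frac{1}{2\,d}\iS{|\nabla v|^2}$, which is the desired inequality after exponentiating and restoring the subtracted mean.

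For the sharpness of $\frac{1}{2\,d}$ I would insert $v_\varepsilon(\xi):=\varepsilon\,\xi_d$ into a hypothetical $\log\iS{e^v}\le C\iS{|\nabla v|^2}$. Since $\overline{v_\varepsilon}=0$, $\iS{v_\varepsilon^2}=\tfrac{\varepsilon^2}{d+1}$ and $\iS{|\nabla v_\varepsilon|^2}=\tfrac{d\,\varepsilon^2}{d+1}$ by $-\Lap\xi_d=d\,\xi_d$, the Taylor expansion $\iS{e^{\varepsilon\,\xi_d}}=1+\tfrac{\varepsilon^2}{2(d+1)}+O(\varepsilon^4)$ forces $C\ge\tfrac{1}{2\,d}$. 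The main obstacle I foresee is making the expansions of $\iS{u_\qp^{\qp}}$ and of its $2/\qp$-power rigorous so that the $O(1/\qp^2)$ remainders remain harmless after multiplication by $\qp/2$; restricting first to bounded $v$ makes this routine via dominated convergence, and the final density extension to $v\in\H^1(\S^d,d\mu)$ is the only genuinely analytic step.
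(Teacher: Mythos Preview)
Your proposal is correct and follows essentially the same route as the paper's own proof: both insert $u=1+v/\qp$ into~\eqref{Ineq:Interpolation}, expand both sides to first order in $1/\qp$, and pass to the limit $\qp\to\infty$; the sharpness argument via $v_\varepsilon(\xi)=\varepsilon\,\xi_d$ is likewise identical. The only difference is that you add a layer of rigor the paper omits---reducing first to bounded $v$ so that $(1+v/\qp)^\qp\to e^v$ uniformly, and then extending to general $v\in\H^1(\S^d,d\mu)$ by truncation and Fatou's lemma---which is a welcome clarification rather than a new idea.
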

\begin{proof}In dimension $d=1$ or $d=2$, Inequality~\eqref{Ineq:Interpolation} holds for any $\qp>2$. Take $u=1+v/\qp$ and consider the limit as $\qp\to\infty$. We observe that
\[
\iS{|\nabla u|^2}=\frac 1{\qp^2}\iS{|\nabla v|^2}\quad\mbox{and}\quad\lim_{\qp\to\infty}\iS{|u|^\qp}=\iS{e^v}
\]
so that
\[
\(\iS{|u|^\qp}\)^{2/\qp}-1\sim \frac 2\qp\,\log\(\iS{e^v}\)\quad\mbox{and}\quad\iS{|u|^2}-1\sim \frac 2\qp\,\iS v\;.
\]
The conclusion holds by passing to the limit $\qp\to\infty$ in Inequality~\eqref{Ineq:Interpolation}. Optimality is once more achieved by considering $v=\varepsilon\,v_1$, $v_1(\xi)=\xi_d$, $d=1$ and Taylor expanding both sides of the inequality in terms of $\varepsilon>0$, small. Notice indeed that $-\Lap v_1=\lambda_1\,v_1$ with $\lambda_1=d$, so that
\[
\nrm{\nabla u}2^2=\varepsilon^2\,\nrm{\nabla v_1}2^2=\varepsilon^2\,d\,\nrm{v_1}2^2\;,
\]
$\iS{v_1}=\overline v_1=0$, and
\[
\iS{e^{v-\overline v}}-1\sim\frac{\varepsilon^2}2\iS{|v-\overline v|^2}=\frac 12\,\varepsilon^2\,\nrm{v_1}2^2\;.
\]
\end{proof}

\subsection{Interpolation and a spectral approach for \texorpdfstring{$\qp\in(1,2)$}{qp in (1,2)}}\label{SubSec:OneTwo}

In \cite{MR954373}, W.~Beckner gave a method to prove interpolation inequalities between logarithmic Sobolev and Poincar\'e inequalities in case of a Gaussian measure. Here we shall prove that the method extends to the case of the sphere and therefore provides another family of interpolating inequalities, in a new range: $\qp\in[1,2)$, again with optimal constants. For further considerations on inequalities that interpolate between Poincar\'e and logarithmic Sobolev inequalities, we refer to \cite{MR2152502,ABD05,ABD,Bakry-Emery85,MR772092,MR2766956,MR2609029,MR2081075,MR1796718} and references therein.

Our purpose is to extend \eqref{Ineq:Interpolation} written as
\be{Eqn:BecknerExtended}
\frac 1d\iS{|\nabla u|^2}\ge \frac{\(\iS{|u|^\qp}\)^{2/\qp}-\iS{|u|^2}}{\qp-2}\quad\forall\;u\in\H^1(\S^d,d\mu)
\ee
to the case $\qp\in[1,2)$. Let us start with a remark.
\begin{remark}\label{Rem:Poincare} At least for any nonnegative function $v$, using the fact that $\mu$ is a probability measure on $\S^d$, we may notice that
\[
\iS{\left|v-\overline v\right|^2}=\iS{\left|v\right|^2}-\(\iS v\)^2
\]
can be rewritten as
\[
\iS{\left|v-\overline v\right|^2}=\frac{\iS{|v|^2}-\(\iS{|v|^\qp}\)^{2/\qp}}{2-\qp}\;,
\]
for $\qp=1$, hence extending \eqref{Ineq:Interpolation} to the case $q=1$. However, as already noticed for instance in \cite{ABD05}, the inequality
\[
\iS{|v|^2}-\(\iS{|v|}\)^2\le \frac 1d\iS{|\nabla v|^2}
\]
also means that, for any $c\in\R$,
\[
\iS{|v+c|^2}-\(\iS{|v+c|}\)^2\le \frac 1d\iS{|\nabla v|^2}\;.
\]
If $v$ is bounded from below a.e.~with respect to $\mu$ and $c>-\mathrm{infess}_\mu v$, so that $v+c>0$ $\mu$ a.e., the left hand-side is
\[
\iS{|v+c|^2}-\(\iS{|v+c|}\)^2=c^2+2\,c\iS v+\iS{|v|^2}-\(c+\iS v\)^2=\iS{\left|v-\overline v\right|^2}\;,
\]
so that the inequality is the usual Poincar\'e inequality. By density, we recover that \eqref{Eqn:BecknerExtended} written for $\qp=1$ exactly amounts to Poincar\'e's inequality written not only for $|v|$, but also for any $v\in\H^1(\S^d,d\mu)$.
\end{remark}

\medskip Next, using the method introduced by W.~Beckner in \cite{MR954373} in case of a Gaussian measure, we are in position to prove \eqref{Eqn:BecknerExtended} for any $\qp\in(1,2)$, knowing that the inequality holds for $\qp=1$ and $\qp=2$.
\begin{prop}\label{Prop:4} Inequality \eqref{Eqn:BecknerExtended} holds for any $\qp\in(1,2)$ and any $d\ge 1$. Moreover $d$ is the optimal constant. \end{prop}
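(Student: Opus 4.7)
My plan is to adapt W.~Beckner's Gaussian interpolation argument \cite{MR954373} to the sphere, invoking only the logarithmic Sobolev inequality of Corollary~\ref{Cor:LogSob} and the spectral decomposition of $-\Lap$. The two ingredients are the sharp log-Sobolev estimate $\iS{|u|^2\,\log(|u|^2/\nrm u 2^2)}\le\frac2d\iS{|\nabla u|^2}$ and the fact that $-\Lap$ has pure point spectrum on spherical harmonics of degree $k$ with eigenvalues $\lambda_k=k(k+d-1)$, the spectral gap being $\lambda_1=d$.

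By the equivalence between logarithmic Sobolev inequalities and hypercontractivity (Gross's theorem), Corollary~\ref{Cor:LogSob} yields
\[
\nrm{e^{t\Lap}u}{q}\le\nrm u p\qquad\text{whenever } 1<p\le q\text{ and }q-1\le e^{2dt}(p-1).
\]
Fixing $\qp\in(1,2)$, I would choose $q=2$ at the critical time $t_*:=-\frac1{2d}\log(\qp-1)>0$, so that $\nrm{e^{t_*\Lap}u}{2}\le\nrm u \qp$. Decomposing $u=\sum_{k\ge 0}u_k$ along the orthogonal basis of spherical harmonics, Parseval and the choice of $t_*$ give
\[
\nrm{e^{t_*\Lap}u}{2}^2=\sum_{k\ge 0}e^{-2t_*\lambda_k}\nrm{u_k}{2}^2=\sum_{k\ge 0}(\qp-1)^{\lambda_k/d}\nrm{u_k}{2}^2,
\]
and therefore
\[
\nrm u 2^2-\nrm u \qp^2\le\nrm u 2^2-\nrm{e^{t_*\Lap}u}{2}^2=\sum_{k\ge 1}\bigl(1-(\qp-1)^{\lambda_k/d}\bigr)\nrm{u_k}{2}^2.
\]
Since $\lambda_k/d\ge 1$ for every $k\ge 1$ (which is exactly the Poincar\'e spectral gap saturated at $k=1$), the elementary concavity bound $1-s^m\le m(1-s)$, valid for $s\in[0,1]$ and $m\ge 1$, applied with $s=\qp-1$ and $m=\lambda_k/d$, gives $1-(\qp-1)^{\lambda_k/d}\le\frac{\lambda_k}{d}(2-\qp)$. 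Summing yields
\[
\nrm u 2^2-\nrm u \qp^2\le\frac{2-\qp}{d}\sum_{k\ge 1}\lambda_k\nrm{u_k}{2}^2=\frac{2-\qp}{d}\iS{|\nabla u|^2},
\]
which is precisely \eqref{Eqn:BecknerExtended}.

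The main obstacle is the invocation of the sharp hypercontractive bound at the critical time $t_*$; once it is in place, the rest is a genuinely spectral argument whose critical inequality $\lambda_k/d\ge 1$ is nothing but the Poincar\'e spectral gap. Optimality of the constant $d$ follows as in the discussion after Theorem~\ref{Thm:Main}: testing with $u=1+\varepsilon\,v$ where $v(\xi)=\xi_d$, both sides of \eqref{Eqn:BecknerExtended} reduce to $\varepsilon^2\nrm v 2^2(1+o(1))$ as $\varepsilon\to 0$, since $\lambda_1=d$. As a consistency check, the limit $\qp\to 1^+$ sends $t_*\to\infty$, which annihilates all non-constant modes and recovers Poincar\'e's inequality, in agreement with Remark~\ref{Rem:Poincare}.
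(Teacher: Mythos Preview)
Your proof is correct and follows essentially the same route as the paper's: both derive the hypercontractive bound $\nrm{e^{t_*\Lap}u}{2}\le\nrm u\qp$ at the critical time $e^{-2dt_*}=\qp-1$ from the logarithmic Sobolev inequality via Gross's argument, and then combine it with the spectral decomposition of $-\Lap$. Your concavity estimate $1-s^m\le m(1-s)$ is precisely the paper's monotonicity of $\lambda\mapsto(1-e^{-2\lambda t_*})/\lambda$ rewritten with $s=e^{-2\lambda_1 t_*}$ and $m=\lambda_k/\lambda_1$, and the optimality argument is identical.
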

\begin{proof} Optimality can be checked by Taylor expanding $u=1+\varepsilon\,v$ at order two in terms of $\varepsilon>0$ as in the case $\qp=2$ (logarithmic Sobolev inequality). To establish the inequality itself, we may proceed in two steps.

\medskip\noindent\emph{$1^{\rm st}$ step: Nelson's hypercontractivity result.\/} Although the result can be established by direct methods, we follow here the strategy of Gross in \cite{Gross75}, which proves the equivalence of the optimal hypercontractivity result and the optimal logarithmic Sobolev inequality.

Consider the heat equation of $\S^d$, namely
\[
\frac{\partial f}{\partial t}=\Lap f
\]
with initial datum $f(t=0,\cdot)=u\in L^{2/\qp}(\S^d)$, for some $\qp\in(1,2]$, and let $F(t):=\nrm {f(t,\cdot)}{p(t)}$. The key computation goes as follows.
\begin{multline*}
\frac{F'}F=\frac d{dt}\,\log F(t)=\frac d{dt}\,\left[\frac 1{p(t)}\,\log\(\iS{|f(t,\cdot)|^{p(t)}}\)\right]\\
=\frac{p'}{p^2\,F^p}\left[\iS{v^2\log\(\frac{v^2}{\iS{v^2}}\)}+4\,\frac{p-1}{p'}\,\iS{|\nabla v|^2}\right]
\end{multline*}
with $v:=|f|^{p(t)/2}$. Assuming that $4\,\frac{p-1}{p'}=\frac 2d$, that is
\[
\frac{p'}{p-1}=2\,d\;,
\]
we find that
\[
\log\(\frac{p(t)-1}{\qp-1}\)=2\,d\,t
\]
if we require that $p(0)=\qp<2$. Let $t_*>0$ be such that $p(t_*)=2$. As a consequence of the above computation, we have
\be{Ineq:Nelson}
\nrm{f(t_*,\cdot)}2\le\nrm u{2/\qp}\quad\mbox{if}\quad\frac 1{\qp-1}=e^{2\,d\,t_*}\;.
\ee

\medskip\noindent\emph{$2^{\rm nd}$ step: Spectral decomposition.\/} Let $u=\sum_{k\in\N}u_k$ be a decomposition of the initial datum on the eigenspaces of $-\Lap$ and denote by $\lambda_k=k\,(d+k-1)$ the ordered sequence of the eigenvalues: $-\Lap u_k=\lambda_k\,u_k$ (see for instance \cite{MR0282313}). Let $a_k=\nrm{u_k}2^2$. As a straightforward consequence of this decomposition, we know that $\nrm u2^2=\sum_{k\in\N}a_k$, $\nrm{\nabla u}2^2=\sum_{k\in\N}\lambda_k\,a_k$,
\[
\nrm{f(t_*,\cdot)}2^2=\sum_{k\in\N}a_k\,e^{-2\,\lambda_k\,t_*}\;.
\]
Using \eqref{Ineq:Nelson}, it follows that
\[
\frac{\(\iS{|u|^\qp}\)^{2/\qp}-\iS{|u|^2}}{\qp-2}\le\frac{\(\iS{|u|^2}\)-\iS{|f(t_*,\cdot)|^2}}{2-\qp}=\frac 1{2-\qp}\sum_{k\in\N^*}\lambda_k\,a_k\,\frac{1-e^{-2\,\lambda_k\,t_*}}{\lambda_k}\;.
\]
Notice that $\lambda_0=0$ so that the term corresponding to $k=0$ can be omitted in the series. Since $\lambda\mapsto\frac{1-e^{-2\,\lambda\,t_*}}{\lambda}$ is decreasing, we can bound $\frac{1-e^{-2\,\lambda_k\,t_*}}{\lambda_k}$ from above by $\frac{1-e^{-2\,\lambda_1\,t_*}}{\lambda_1}$ for any $k\ge 1$. This proves that
\[
\frac{\(\iS{|u|^\qp}\)^{2/\qp}-\iS{|u|^2}}{\qp-2}\le\frac{1-e^{-2\,\lambda_1\,t_*}}{(2-\qp)\,\lambda_1}\sum_{k\in\N^*}\lambda_k\,a_k=\frac{1-e^{-2\,\lambda_1\,t_*}}{(2-\qp)\,\lambda_1}\,\nrm{\nabla u}2^2\;.
\]
The conclusion easily follows if we notice that $\lambda_1=d$, and $e^{-2\,\lambda_1\,t_*}=\qp-1$ so that
\[
\frac{1-e^{-2\,\lambda_1\,t_*}}{(2-\qp)\,\lambda_1}=\frac 1d\;.
\]
The optimality of this constant can be checked as in the case $\qp>2$ by a Taylor expansion of $u=1+\varepsilon\,v$ at order two in terms of $\varepsilon>0$, small.
\end{proof}

\section{Symmetrization and the ultraspherical framework}

\subsection{A reduction to the ultraspherical framework}\label{SubSec:reduction}

\medskip We denote by $(\xi_0,\,\xi_1,\ldots\xi_d)$ the coordinates of an arbitrary point $\xi\in\S^d$, with $\sum_{i=0}^d|\xi_i|^2=1$. The following symmetry result is kind of folklore in the literature and we can quote \cite{MR0402083,MR717827,MR1164616} for various related results.
\begin{lem}\label{Lem:Sym} Up to a rotation, any minimizer of \eqref{Eqn:Infimum} depends only on $\xi_d$.\end{lem}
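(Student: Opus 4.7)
The plan is to apply the symmetric decreasing rearrangement on $\S^d$ centered at a chosen pole on the $\xi_d$-axis. Given a minimizer $u$ of $\mathcal Q_p$, I first replace $u$ by $|u|$: this preserves $\nrm{u}{2}$ and $\nrm{u}{p}$ and does not increase $\iS{|\nabla u|^2}$ (since $\bigl|\nabla|u|\bigr|=|\nabla u|$ almost everywhere), so $|u|$ is still a minimizer and I may assume $u\ge 0$. I then let $u^*$ denote the spherical cap rearrangement of $u$ about the pole $(0,\ldots,0,1)$, i.e.\ the unique nonincreasing function of the geodesic distance to this pole whose super-level sets $\{u^*>t\}$ are geodesic caps centered at the pole with $\mu(\{u^*>t\})=\mu(\{u>t\})$. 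By construction, $u^*$ depends only on $\xi_d$.

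Two classical properties of this rearrangement (see \cite{MR0402083,MR717827,MR1164616}) then apply: equimeasurability yields $\nrm{u^*}{q}=\nrm{u}{q}$ for every $q\in[1,\infty]$, and the P\'olya-Szeg\H{o} inequality on the sphere gives $\iS{|\nabla u^*|^2}\le\iS{|\nabla u|^2}$. Hence $\mathcal Q_p[u^*]\le\mathcal Q_p[u]$ for every admissible $p$, so $u^*$ is itself a minimizer, and it depends only on $\xi_d$.

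To upgrade this to the stronger claim that every minimizer, up to a rotation, depends only on $\xi_d$, I would invoke the equality case of the P\'olya-Szeg\H{o} inequality, the spherical analogue of the Brothers-Ziemer theorem: if $\iS{|\nabla u^*|^2}=\iS{|\nabla u|^2}$ and $u^*$ has no nontrivial flat plateaux, then $u=u^*\circ R$ for some rotation $R\in\mathrm{SO}(d+1)$. The absence of plateaux comes from elliptic regularity: a minimizer solves the Euler-Lagrange equation~\eqref{Eqn:EL} and is therefore smooth on $\S^d$, so by Sard's theorem the set of critical values of $u^*$ has measure zero, and the Brothers-Ziemer rigidity applies.

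The main obstacle is precisely this equality analysis on a closed manifold: the Brothers-Ziemer argument transcribes from Euclidean space, but one must handle level sets carefully near both poles of the cap-rearrangement and check that the coarea-type identities used in the proof remain valid on $\S^d$. The cases $p\in[1,2)$ and $p=2$ require no separate treatment, because the denominator of $\mathcal Q_p$ depends only on the distribution function of $|u|$ (for $p=2$ this uses that $t\mapsto t\log t$ is applied pointwise to $|u|^2$), hence is invariant under rearrangement, and the argument above carries through verbatim.
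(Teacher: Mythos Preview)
Your proposal is correct and follows essentially the same route as the paper: spherical cap symmetrization plus the P\'olya--Szeg\H{o} inequality, with the equality case yielding the ``any minimizer'' statement; the paper's own proof is much terser and simply asserts that equality in the rearrangement inequality forces dependence on a single coordinate, so you have made explicit the Brothers--Ziemer step that the paper leaves implicit. One small correction: Sard's theorem controls the measure of critical \emph{values}, not the measure of level sets needed to rule out plateaux; the absence of plateaux follows instead from real-analyticity of the minimizer (elliptic regularity on $\S^d$ with analytic coefficients), which forces every level set $\{u=t\}$ to have $\mu$-measure zero.
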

\begin{proof} Let $u$ be a minimizer for $\mathcal Q_\qp$. By writing $u$ in \eqref{Ineq:Interpolation} in spherical coordinates $\theta\in[0,\pi]$, $\varphi_1$, $\varphi_2$,... $\varphi_{d-1}\in[0,2\pi)$ and using decreasing rearrangements (see for instance~\cite{brock2000general}), it is not difficult to prove that among optimal functions, there is one which depends only on $\theta$. Moreover, equality in the rearrangement inequality means that $u$ has to depend on only one coordinate, $\xi_d=\sin\theta$.\end{proof} 

Let us observe that the problem on the sphere can be reduced to a problem involving the ultraspherical operator:
\begin{enumerate}
\item[$\bullet$] Using Lemma~\ref{Lem:Sym}, we know that \eqref{Ineq:Interpolation} is equivalent~to
\[
\frac{\qp-2}d\int_0^\pi|v'(\theta)|^2\;d\sigma+\int_0^\pi|v(\theta)|^2\;d\sigma\ge\(\int_0^\pi |v(\theta)|^\qp\;d\sigma\)^\frac 2\qp
\]
for any function $v\in\mathrm H^1([0,\pi],d\sigma)$, where
\[
d\sigma(\theta):=\frac{(\sin\theta)^{d-1}}{Z_d}\,d\theta\quad\mbox{with}\quad Z_d:=\sqrt\pi\,\frac{\Gamma(\tfrac d2)}{\Gamma(\tfrac{d+1}2)}\;.
\]
\item[$\bullet$] The change of variables $x=\cos\theta$, $v(\theta)=f(x)$ allows to rewrite the inequality as
\[\label{Ultraspherical}
\frac{\qp-2}d\int_{-1}^1|f'|^2\;\nu\;d\nu_d+\ix{|f|^2}\ge\(\ix{|f|^\qp}\)^\frac 2\qp
\]
where $d\nu_d$ is the probability measure defined by
\[
\nu_d(x)\,dx=d\nu_d(x):=Z_d^{-1}\,\nu^{\frac d2-1}\,dx\quad\mbox{with}\quad\nu(x):=1-x^2\;,\quad Z_d=\sqrt\pi\,\frac{\Gamma(\tfrac d2)}{\Gamma(\tfrac{d+1}2)}\;.
\]
\end{enumerate}
We may also want to prove the result in case $p<2$, to have the counterpart of Theorem~\ref{Thm:Main} in the ultraspherical setting. On $[-1,1]$, consider the probability measure $d\nu_d$ and define
\[
\nu(x):=1-x^2\,,
\]
so that $d\nu_d=Z_d^{-1}\,\nu^{\frac d2-1}\,dx$. We consider the space $\mathrm L^2((-1,1),d\nu_d)$ with scalar product
\[
\scal{f_1}{f_2}=\ix{f_1\,f_2}
\]
and use the notation
\[
\nrmx f\qp=\(\ix{f^\qp}\)^\frac 1\qp\;.
\]
On $\mathrm L^2((-1,1),d\nu_d)$, we define the self-adjoint \emph{ultraspherical} operator by
\[
\L f:=(1-x^2)\,f''-d\,x\,f'=\nu\,f''+\frac d2\,\nu'\,f'
\]
which satisfies the identity
\[
\scal{f_1}{\L f_2}=-\ix{f_1'\,f_2'\;\nu}\;.
\]
Then the result goes as follows.
\begin{prop}\label{Prop:Ultra} Let $p\in[1,2^*]$, $d\ge 1$. Then we have
\be{InterpUS}
-\scal f{\L f}=\ix{|f'|^2\;\nu}\ge d\,\frac{\nrmx f\qp^2-\nrmx f2^2}{\qp-2}\quad\forall\,f\in\H^1([-1,1],d\nu_d)
\ee
if $p\neq 2$, and
\[
-\scal f{\L f}=\frac d2\ix{|f|^2\,\log\(\frac{|f|^2}{\nrmx f2^2}\)}
\]
if $p=2$.
\end{prop}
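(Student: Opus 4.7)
The plan is to deduce the proposition directly from the sphere inequalities already established, by lifting any $f\in\H^1([-1,1],d\nu_d)$ to the zonal function $u:\S^d\to\R$ defined by $u(\xi):=f(\xi_d)$. With this lift, \eqref{InterpUS} will simply be \eqref{Ineq:Interpolation} specialized to $u$, and the logarithmic version will be Corollary~\ref{Cor:LogSob} specialized to the same $u$. The entire content of the proposition is therefore a change-of-variables statement.

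First I would set up the dictionary between the two settings. Parametrize $\S^d$ by the polar angle $\theta\in[0,\pi]$ measured from the axis $\xi_d=1$ together with $d-1$ transverse angular coordinates; then $\xi_d=\cos\theta$ and the uniform probability measure factorizes as $d\mu=d\sigma(\theta)\otimes d\omega_{d-1}$, with $d\sigma(\theta)=Z_d^{-1}(\sin\theta)^{d-1}\,d\theta$, exactly as recorded in Section~\ref{SubSec:reduction}. For a zonal function $u(\xi)=v(\theta)=f(\cos\theta)$, integrating out the transverse coordinates and substituting $x=\cos\theta$ immediately yields $\iS{|u|^q}=\ix{|f|^q}$ for every admissible $q$, because $d\sigma$ transforms into $d\nu_d=Z_d^{-1}\nu^{d/2-1}dx$. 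For the gradient, since $u$ depends only on latitude, $\nabla u$ is tangent to the meridian and $|\nabla u|=|v'(\theta)|=\sin\theta\,|f'(\cos\theta)|$, so after the same substitution
\[
\iS{|\nabla u|^2}=\int_0^\pi|v'(\theta)|^2\,d\sigma=\ix{|f'|^2\,\nu}=-\scal{f}{\L f}.
\]

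With this dictionary in hand the proof is immediate. For $\qp\neq 2$, substituting $u=f(\xi_d)$ into Theorem~\ref{Thm:Main} produces exactly \eqref{InterpUS}; for $\qp=2$, substituting the same $u$ into Corollary~\ref{Cor:LogSob} produces the logarithmic form stated in the proposition. Lemma~\ref{Lem:Sym} moreover guarantees that the reduction is lossless, since the sphere infimum is already attained on zonal functions, so no sharpness issue arises when passing from $\S^d$ to $(-1,1)$.

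The only obstacle worth naming is the gradient identity $|\nabla u|^2=\nu(x)|f'(x)|^2$, but this is routine. It can be read off from the sphere's metric in polar coordinates, or equivalently derived from the self-adjointness relation $\scal{f_1}{\L f_2}=-\ix{f_1'\,f_2'\,\nu}$ that the paper has already used to define $\L$. No genuinely new analytical ingredient is needed beyond what was employed on $\S^d$.
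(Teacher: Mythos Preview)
Your argument is correct: once Theorem~\ref{Thm:Main} and Corollary~\ref{Cor:LogSob} are in hand, lifting $f$ to the zonal function $u(\xi)=f(\xi_d)$ and applying the change-of-variables dictionary of Section~\ref{SubSec:reduction} immediately yields Proposition~\ref{Prop:Ultra}. The gradient identity $|\nabla u|^2=\nu\,|f'|^2$ and the measure identity $d\mu\mapsto d\nu_d$ are exactly the ones the paper records there, so nothing is missing.

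However, this is the opposite direction from what the paper does, and the difference matters. In Section~\ref{SubSec:classical} the paper gives a self-contained proof of Proposition~\ref{Prop:Ultra} that does \emph{not} invoke Theorem~\ref{Thm:Main}: it takes a minimizer $f$ of the deficit functional, writes the Euler--Lagrange equation, substitutes $f=u^\beta$, and tests the equation against $\L u$ and against $\tfrac{|u'|^2}{u}\,\nu$; combining the resulting identities with \eqref{Gamma2} and \eqref{L-Gamma} produces a quadratic relation~\eqref{identity0} in the three integrals $\ix{|u''|^2\nu^2}$, $\ix{\tfrac{|u'|^2 u''}{u}\nu^2}$, $\ix{\tfrac{|u'|^4}{u^2}\nu^2}$ whose discriminant can be made negative by choosing $\beta$, forcing $u$ to be constant. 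The whole point of this section is that, via the equivalence established by Lemma~\ref{Lem:Sym} and the change of variables, this direct ultraspherical computation \emph{constitutes} a new and simplified proof of Theorem~\ref{Thm:Main}. Your route, while valid given the prior literature cited for Theorem~\ref{Thm:Main}, is circular relative to the paper's program: it consumes the sphere inequality instead of supplying a fresh proof of it.
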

We may notice that the proof in~\cite{BV-V} requires $d\ge2$ while the case $d=1$ is also covered in \cite{MR1230930}. In Bentaleb \emph{et al.,} the restriction $d\ge2$ has been removed in~\cite{MR2641798}. Our proof is inspired by \cite{BV-V} and \cite{MR1231419,MR1971589}, but it is a simplification (in the particular case of the ultraspherical operator) in the sense that only integration by parts and elementary estimates are used.

\subsection{A proof of Proposition~\ref{Prop:Ultra}}\label{SubSec:classical}

Let us start with some preliminary observations. The operator $\L$ does not commute with the derivation, but we have the relation
\[\label{Commutation}
\left[\frac\partial{\partial x},\L\right]\,\fu=\(\L \fu\)'-\L \fu'=-2\,x\,\fu''-d\,\fu'\;.
\]
As a consequence, we obtain
\[
\scal{\L \fu}{\L \fu}=-\ix{\fu'\,\(\L \fu\)'\;\nu}=-\ix{\fu'\,\L \fu'\;\nu}+\ix{\fu'\,\(2\,x\,\fu''+d\,\fu'\)\;\nu}
\]
and
\[
\scal{\L \fu}{\L \fu}=\ix{|\fu''|^2\;\nu^2}-d\,\scal \fu{\L \fu}\;,
\]
\be{Gamma2}
\ix{(\L u)^2}=\scal{\L \fu}{\L \fu}=\ix{|\fu''|^2\;\nu^2}+d\ix{|\fu'|^2\;\nu}\;.
\ee
On the other hand, a few integrations by parts show that
\be{L-Gamma}
\scal{\frac{|\fu'|^2}\fu\;\nu}{\L \fu}=\frac d{d+2}\ix{\frac{|\fu'|^4}{\fu^2}\;\nu^2}-\,2\,\frac{d-1}{d+2}\ix{\frac{|\fu'|^2\,\fu''}\fu\;\nu^2}\;,
\ee
where we have used the fact that $\nu\,\nu'\,\nu_d=\frac 2{d+2}\,(\nu^2\,\nu_d)'$.

\medskip Let $p\in(1,2)\cup (2,2^*)$. In $H^1([-1,1],d\nu_d)$, consider now a minimizer $f$ for the functional
\[
f\mapsto \ix{|f'|^2\;\nu}- d\,\frac{\nrmx f\qp^2-\nrmx f2^2}{\qp-2}=:\mathcal F[f]
\]
made of the difference of the two sides in inequality \eqref{InterpUS}. The existence of such a minimizer can be proved by classical minimization and compactness arguments. Up to a multiplication by a constant, $f$ satisfies the Euler-Lagrange equation
\[
-\frac{p-2}d\,\L f+f=f^{p-1}\;.
\]
Let $\beta$ be a real number to be fixed later and define $u$ such that $f=u^\beta$, so that
\[
\L f=\beta\,u^{\beta-1}\,\(\L u+(\beta-1)\,\frac{|u'|^2}u\,\nu\)\,.
\]
Then $u$ is a solution to
\[
-\L u-(\beta-1)\,\frac{|u'|^2}u\,\nu+\lambda\,u=\lambda\,u^{1+\beta\,(p-2)}\quad\mbox{with}\quad\lambda:=\frac d{(p-2)\,\beta}\;.
\]
If we multiply the equation for $u$ by $\frac{|u'|^2}u\,\nu$ and integrate, we get
\[
-\ix{\L u\,\frac{|u'|^2}u\,\nu}-(\beta-1)\ix{\frac{|u'|^4}{u^2}\,\nu^2}+\lambda\ix{|u'|^{2}\;\nu}=\lambda\,\ix{u^{\beta\,(p-2)}\,|u'|^2\;\nu}\;.
\]
If we multiply the equation for $u$ by $-\,\L u$ and integrate, we get
\[
\ix{(\L u)^2}+(\beta-1)\ix{\L u\,\frac{|u'|^2}u\;\nu}+\lambda\ix{|u'|^2\;\nu}=(\lambda+d)\ix{u^{\beta\,(p-2)}\,|u'|^2\;\nu}\;.
\]
Collecting terms, we have found that
\[
\ix{(\L u)^2}+\(\beta+\frac d\lambda\)\ix{\L u\,\frac{|u'|^2}u\;\nu}+(\beta-1)\,\(1+\frac d\lambda\)\ix{\frac{|u'|^4}{u^2}\,\nu^2}-d\ix{|u'|^2\;\nu}=0\,.
\]
Using~\eqref{Gamma2} and~\eqref{L-Gamma}, we get
\begin{multline*}
\ix{|\fu''|^2\;\nu^2}+\(\beta+\frac d\lambda\)\left[\frac d{d+2}\ix{\frac{|\fu'|^4}{\fu^2}\;\nu^2}-\,2\,\frac{d-1}{d+2}\ix{\frac{|\fu'|^2\,\fu''}\fu\;\nu^2}\right]\\
+(\beta-1)\,\(1+\frac d\lambda\)\ix{\frac{|u'|^4}{u^2}\,\nu^2}=0 \,,
\end{multline*}
that is
\be{identity0}
\mathsf a\ix{|\fu''|^2\;\nu^2}+2\,\mathsf b\ix{\frac{|\fu'|^2\,\fu''}\fu\;\nu^2}+\mathsf c\ix{\frac{|\fu'|^4}{\fu^2}\;\nu^2}=0
\ee
where
\begin{eqnarray*}
&&\mathsf a=1\;,\\
&&\mathsf b=-\,\(\beta+\frac d\lambda\)\,\frac{d-1}{d+2}\;,\\
&&\mathsf c=\(\beta+\frac d\lambda\)\frac d{d+2}+(\beta-1)\,\(1+\frac d\lambda\)\;.
\end{eqnarray*}
Using $\frac d\lambda=(p-2)\,\beta$, we observe that the reduced discriminant
\[
\delta=\mathsf b^2-\mathsf a\,\mathsf c<0
\]
can be written as
\[
\delta=A\,\beta^2+B\,\beta+1\quad\mbox{with}\quad A=(p-1)^2\,\frac{(d-1)^2}{(d+2)^2}-p+2\quad\mbox{and}\quad B=p-3-\frac{d\,(p-1)}{d+2}\;.
\]
If $p<2^*$, $B^2-4\,A$ is positive and it is therefore possible to find $\beta$ such that $\delta<0$. 

Hence, if $p<2^*$, we have shown that $\mathcal F[f]$ is positive unless the three integrals \eqref{identity0} are equal to $0$, that is, $u$ is constant. It follows that $\mathcal F[f]=0$, which proves \eqref{InterpUS} if $p\in(1,2)\cup (2,2^*)$. The cases $p=1$, $p=2$ (\emph{cf.} Corollary~\ref{Cor:LogSob}) and $p=2^*$ can be proved as limit cases. This concludes the proof of Proposition~\ref{Prop:Ultra}.

\section{A proof based on a flow in the ultraspherical setting}

Inequality~\eqref{InterpUS} can be rewritten for $g=f^\qp$, \emph{i.e.}~$f=g^\alpha$ with $\alpha=1/\qp$, as
\[
-\scal f{\L f}=-\scal{g^\alpha}{\L g^\alpha}=:\I g\ge d\,\frac{\nrmx g1^{2\,\alpha}-\nrmx{g^{2\,\alpha}}1}{\qp-2}=:\F g
\]

\subsection{Flow}
Consider the flow associated to $\L$, that is
\be{flow}
\frac{\partial g}{\partial t}=\L g\;,
\ee
and observe that
\[
\frac d{dt}\,\nrmx g1=0\;,\quad
\frac d{dt}\,\nrmx{g^{2\,\alpha}}1=-\,2\,(\qp-2)\,\scal f{\L f}=2\,(\qp-2)\,\ix{|f'|^2\;\nu}
\]
which finally gives
\[
\frac d{dt}\F{g(t,\cdot)}=-\frac d{\qp-2}\,\frac d{dt}\,\nrmx{g^{2\,\alpha}}1=-\,2\,d\,\I{g(t,\cdot)}
\]

\subsection{Method}
If \eqref{InterpUS} holds, then
\be{EntropyUS}
\frac d{dt}\F{g(t,\cdot)}\le-\,2\,d\,\F{g(t,\cdot)}\;,
\ee
thus proving
\[
\F{g(t,\cdot)}\le\F{g(0,\cdot)}\,e^{-\,2\,d\,t}\quad\forall\;t\ge0\;.
\]
This estimate is actually equivalent to \eqref{InterpUS} as can be shown by estimating $\frac d{dt}\F{g(t,\cdot)}$ at $t=0$.

The method based on the Bakry-Emery approach amounts to establish first that
\be{BE}
\frac d{dt}\I{g(t,\cdot)}\le-\,2\,d\,\I{g(t,\cdot)}
\ee
and prove \eqref{EntropyUS} by integrating the estimate on $t\in[0,\infty)$: since
\[
\frac d{dt}\(\F{g(t,\cdot)}-\I{g(t,\cdot)}\)\ge0
\]
and $\lim_{t\to\infty}\(\F{g(t,\cdot)}-\I{g(t,\cdot)}\)=0$, this means that
\[
\F{g(t,\cdot)}-\I{g(t,\cdot)}\le0\quad\forall\;t\ge0
\]
which is precisely \eqref{InterpUS} written for $f(t,\cdot)$ for any $t\ge0$ and in particular for any initial value $f(0,\cdot)$.

The equation for $g=f^\qp$ can be rewritten in terms of $f$ as
\[
\frac{\partial f}{\partial t}=\L f+(\qp-1)\,\frac{|f'|^2}f\;\nu\;.
\]
Hence we have
\[
-\frac 12\,\frac d{dt}\ix{|f'|^2\;\nu}=\frac 12\,\frac d{dt}\,\scal f{\L f}=\scal{\L f}{\L f}+(\qp-1)\,\scal{\frac{|f'|^2}f\;\nu}{\L f}\;,
\]

\subsection{An inequality for the Fisher information} Instead of proving~\eqref{InterpUS}, we will established the following stronger inequality. For any $\qp\in(2,2^\sharp]$,
\be{InterpFisherUS}
\scal{\L f}{\L f}+(\qp-1)\,\scal{\frac{|f'|^2}f\;\nu}{\L f}+d\,\scal f{\L f}\ge0\;.
\ee
Notice that~\eqref{InterpUS} holds under the restriction $\qp\in(2,2^\sharp]$, which is stronger than $\qp\in(2,2^*]$. We do not know whether the exponent $2^\sharp$ in \eqref{InterpFisherUS} is sharp or not.

\subsection{Proof of \texorpdfstring{\eqref{InterpFisherUS}}{12}}

Using~\eqref{Gamma2} and~\eqref{L-Gamma} with $\fu=f$, we find that
\begin{multline*}
\frac d{dt}\ix{|f'|^2\;\nu}+\,2\,d\ix{|f'|^2\;\nu}\\
=-\,2\,\ix{\(|f''|^2+(\qp-1)\,\frac d{d+2}\frac{|f'|^4}{f^2}-\,2\,(\qp-1)\,\frac{d-1}{d+2}\frac{|f'|^2\,f''}f\)\,\nu^2}\;.
\end{multline*}
The right hand side is nonpositive if
\[
|f''|^2+(\qp-1)\,\frac d{d+2}\frac{|f'|^4}{f^2}-\,2\,(\qp-1)\,\frac{d-1}{d+2}\frac{|f'|^2\,f''}f
\]
is pointwise nonnegative, which is granted if
\[
\left[(\qp-1)\,\frac{d-1}{d+2}\right]^2\le(\qp-1)\,\frac d{d+2}\;,
\]
a condition which is exactly equivalent to $\qp\le2^\sharp$.

\subsection{An improved inequality} For any $\qp\in(2,2^\sharp)$, we can write that
\begin{multline*}
|f''|^2+(\qp-1)\,\frac d{d+2}\frac{|f'|^4}{f^2}-\,2\,(\qp-1)\,\frac{d-1}{d+2}\frac{|f'|^2\,f''}f\\
=\alpha\,|f''|^2+\frac{\qp-1}{d+2}\left|\frac{d-1}{\sqrt d}\,f''-\sqrt d\,\frac{|f'|^2}f\right|^2\ge \alpha\,|f''|^2
\end{multline*}
where
\[
\alpha:=1-(\qp-1)\,\frac{(d-1)^2}{d\,(d+2)}
\]
is positive. Now, using the Poincar\'e inequality
\[
\int_{-1}^1|f''|^2\;d\nu_{d+4}\ge(d+2)\int_{-1}^1|f'-\overline{f'}|^2\;d\nu_{d+2}
\]
where
\[
\overline{f'}:=\int_{-1}^1f'\;d\nu_{d+2}=-d\ix{x\,f}\;,
\]
we obtain an improved form of \eqref{InterpFisherUS}, namely
\[
\scal{\L f}{\L f}+(\qp-1)\,\scal{\frac{|f'|^2}f\;\nu}{\L f}+[d+\alpha\,(d+2)]\,\scal f{\L f}\ge0\;,
\]
if we can guarantee that $\overline{f'}\equiv0$ along the evolution determined by \eqref{flow}. This is the case if assume that $f(x)=f(-x)$ for any $x\in[-1,1]$. Under this condition, we find that
\[
\ix{|f'|^2\;\nu}\ge [d+\alpha\,(d+2)]\,\frac{\nrmx f\qp^2-\nrmx f2^2}{\qp-2}\;.
\]
As a consequence, we also have
\[
\iS{|\nabla u|^2}+\iS{|u|^2}\ge\frac{d+\alpha\,(d+2)}{\qp-2}\(\iS{|u|^\qp}\)^{2/\qp}
\]
for any $u\in\mathrm H^1(\S^d,d\mu)$ such that, using spherical coordinates,
\[
u(\theta,\varphi_1,\varphi_2,...\varphi_{d-1})=u(\pi-\theta,\varphi_1,\varphi_2,...\varphi_{d-1})\quad\forall\;(\theta,\varphi_1,\varphi_2,...\varphi_{d-1})\in[0,\pi]\times[0,2\pi)^{d-1}\;.
\]

\subsection{One more remark} The computation is exactly the same if $\qp\in(1,2)$ and we henceforth also prove the result in such a case. The case $\qp=1$ is the limit case corresponding to the Poincar\'e inequality
\[
\int_{-1}^1|f'|^2\;d\nu_{d+2}\ge d\(\ix{|f|^2}-\left|\ix f\right|^2\)
\]
and arises as a straightforward consequence of the spectral properties of $\L$. The case $\qp=2$ is achieved as a limiting case. It gives rise to the logarithmic Sobolev inequality (see for instance \cite{MR674060}).

\subsection{Limitation of the method} The limitation $\qp\le2^\sharp$ comes from the pointwise condition
\[
h:=|f''|^2+(\qp-1)\,\frac d{d+2}\frac{|f'|^4}{f^2}-\,2\,(\qp-1)\,\frac{d-1}{d+2}\frac{|f'|^2\,f''}f\ge0\;.
\]
Can we find special test functions $f$ such that this quantity can be made negative ? which are admissible, {i.e.} such that $h\,\nu^2$ is integrable ? Notice that at $\qp=2^\sharp$, we have that $f(x)=|x|^{1-d}$ is such that $h\equiv0$, but such a function, or functions obtained by slightly changing the exponent, are not admissible for larger values of $\qp$.

By proving that there is contraction of $\mathcal I$ along the flow, we look for a condition which is stronger than asking that there is contraction of $\mathcal F$ along the flow. It is therefore possible that the limitation $\qp\le2^\sharp$ is intrinsic to the method.

\begin{figure}
\includegraphics[width=6cm]{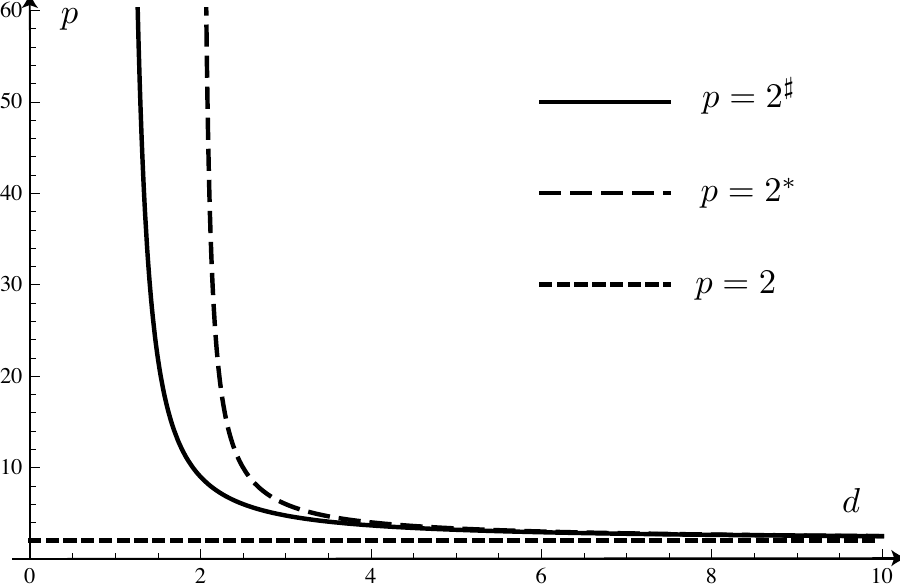}
\caption{\sl Plot of $d\mapsto 2^\sharp=\frac{2\,d^2+1}{(d-1)^2}$ and $d\mapsto 2^*=\frac{2\,d}{d-2}$.}
\end{figure}

\medskip\noindent{\bf Acknowledgements.} J.D.~and M.J.E.~were partially supported by ANR grants \emph{CBDif} and \emph{NoNAP}, and J.D.~by the ECOS project C11E07 \emph{Functional inequalities, asymptotics and dynamics of fronts.} M.K.~was partially supported by Chilean research grants Fondecyt 1090103, Fondo Basal CMM-Chile, Project Anillo ACT-125 CAPDE. M.L.~was supported in part by NSF grant DMS-0901304.\nc \\[4pt]
{\sl\small \copyright~2012 by the authors. This paper may be reproduced, in its entirety, for non-commercial purposes.}
\small

\end{document}